\newtheorem{theorem}{Theorem}[section]
\newtheorem{lemma}[theorem]{Lemma}
\newcommand{\E}{\mathbb{E}}
\renewcommand{\P}{\mathbb{P}}
\newcommand{\R}{\mathbb{R}}
\renewcommand{\C}{\mathbb{C}}
\renewcommand{\Re}{\operatorname{Re}}
\renewcommand{\Im}{\operatorname{Im}}
\newcommand{\bx}{\mathbf{x}}
\newcommand{\bs}{\mathbf{s}}
\newcommand{\by}{\mathbf{y}}
\newcommand{\bz}{\mathbf{z}}
\newcommand{\bu}{\mathbf{u}}
\newcommand{\be}{\boldsymbol{\eta}}
\newcommand{\dd}{{\rm d}}
\newcommand{\rv}{{\rm v}}
\newcommand{\ind}{\mathbbm{1}}
\title[Correlations between real and complex zeros]{Correlations between real and complex zeros \\ of a random polynomial}
\keywords{random polynomials, mixed correlation functions, real zeros, complex zeros, Coarea formula}
\subjclass[2010]{Primary 60B99; secondary, 26C10, 30C15, 60G55}
\thanks{The work  was done with the financial support of the Bielefeld University (Germany) in terms of project SFB 701. The work of the third author is supported by the grant RFBR
16-01-00367 and by the Program of Fundamental
Researches of Russian Academy of Sciences ``Modern Problems of
Fundamental Mathematics''.}
\author[F.~G\"otze]{Friedrich G\"otze}
 \address{Friedrich G\"otze, Faculty of Mathematics,
 Bielefeld University,
 P. O. Box 10 01 31,
 33501 Bielefeld, Germany}
 \email{goetze@math.uni-bielefeld.de}
\author[D.~Koleda]{Denis Koleda}
 \address{Denis Koleda, Institute of Mathematics, National Academy of Sciences of Belarus, 220072 Minsk, Belarus}
 \email{koledad@rambler.ru}
\author[D.~Zaporozhets]{Dmitry Zaporozhets}
 \address{Dmitry Zaporozhets\\
 St.\ Petersburg Department of
 Steklov Institute of Mathematics,
 Fontanka~27,
  191011 St.\ Petersburg,
 Russia}
 \email{zap1979@gmail.com}
\begin{document}

\begin{abstract}
Consider a random polynomial
$$
G(z):=\xi_0+\xi_1z+\dots+\xi_nz^n,\quad z\in\C,
$$
where $\xi_0,\xi_1,\dots,\xi_{n}$ are independent real-valued random variables with probability density functions $f_0,\dots,f_n$. We give an explicit formula for the  mixed $(k,l)$-correlation function $\rho_{k,l}:\R^k\times\C_+^l \to\R_+$ between $k$ real and $l$ complex zeros of $G_n$.
\end{abstract}

\maketitle

\section{Introduction}

Let $\xi_0,\xi_1,\dots,\xi_{n}$ be independent \emph{real-valued} random variables with probability density functions $f_0,\dots,f_n$. Consider a random polynomial
\begin{equation}\label{1914}
G(z):=\xi_0+\xi_1z+\dots+\xi_nz^n,\quad z\in\C.
\end{equation}
With probability one, all zeros of $G$ are simple. Denote by $\mu$ the empirical measure counting zeros of $G$:
\[
\mu:=\sum_{z:G(z)=0}\delta_z,
\]
where $\delta_z$ is the unit point mass at $z$. The random measure $\mu$ may be regarded as a  random point process on $\mathbb{C}$. The natural way of describing the distribution of a point process is via its $\emph correlation functions$. However, since the coefficients of $G$ are real, its  zeros are symmetric with respect to the real line, and some of them are real. Therefore, the natural configuration space for the point process $\mu$ must be a ``separated'' union $\C_+\cup\R$ with topology induced by the union of topologies in $\C_+$ and $\R$. Instead of considering the correlation functions of the process on $\C_+\cup\R$, the equivalent way is to investigate the \emph{mixed $(k,l)$-correlation functions} (see~\cite{TV15}).
We call functions   $\rho_{k,l}\,:\,\R^k\times\mathbb{C}_+^l \to\R_+$, where $k+2l\le n$, the mixed $(k,l)$-correlation functions of zeros of $G$, if for any family of mutually disjoint Borel subsets $B_1,\dots,B_k\subset\R$ and $B_{k+1},\dots,B_{k+l}\subset\C_+$,
\begin{equation}\label{eq-rhokl}
\E\,\left[\prod_{i=1}^{k+l}\mu(B_i)\right]=\int_{B_1}\dots\int_{B_{k+l}}\,\rho_{k,l}(\bx,\bz)\,\dd\bx\,\dd\bz.
\end{equation}
Here and subsequently, we write
$$
\bx:=(x_1,\dots,x_k)\in\R^k,\quad \bz:=(z_1,\dots,z_l)\in\C_+^l.
$$

\bigskip

The most popular class of random polynomilas are \emph{Kac polynomials}, when $\xi_i$'s are i.i.d.  Sometimes the i.i.d. coefficients are considered with some non-random weights $c_i$'s. The common examples are \emph{flat or Weil polynomials} ($c_i=\sqrt{1/i!}$) and \emph{elliptic polynomials} ($c_i=\sqrt{n\choose k}$).

The $(1,0)$-correlation function $\rho_{1,0}$ is called a density of real zeros. Being integrated over $\R$, it equals the average number of real zeros of $G$. The asymptotic properties of this object as $n\to\infty$ have been intensively  studied for many years,  mostly for Kac polynomials; see the historical background  in~\cite{BS86} and the survey of the most recent results in~\cite{TV15}. We just mention some  works:~\cite{mK43}, \cite{EO56}, \cite{dS69}, \cite{IM71-1}, \cite{NZ09}, \cite{LPX15}, \cite{kS16b}.

In the same manner, $\rho_{0,1}$ is called a density of complex zeros, that is an expectation of the empirical measure $\mu$ counting non-real zeros. Its limit behavior as $n\to\infty$ is also of a great interest, see~\cite{SS62}, \cite{IZ97}, \cite{IZ13}, \cite{KZ14b}, \cite{iP16}, \cite{PR16},   and the references given there.

At the same time, there are relatively few papers on higher-order correlation functions of zeros. The most known results are due to Bleher and Di~\cite{BD97}, \cite{BD04} who studied the correlations between real zeros for elliptic and Kac polynomials, and to Tao and Vu~\cite{TV15} who under some moment conditions on $\xi_i$ proved the asymptotic universality for the mixed correlation functions for elliptic, Weil, and Kac polynomials.

\bigskip

The aim of this note is to derive the explicit formula for the mixed correlation functions of zeros of random polynomials with arbitrary absolutely continuous random coefficients. To simplify calculations, we assume that $\xi_i$'s are independent, however, our proof works for the coefficients having an arbitrary joint probability density function.

\section{Main result}

Recall that $f_0,\dots,f_n$ are the probability density functions of the coefficients $\xi_0,\dots,\xi_n$ of $G$; see~\eqref{1914}.  For $m=1,\dots,n,$ consider a function $\rho_m:\C^m\to\R$ defined as
\begin{multline}\label{eq-rho-b}
\rho_m(z_1,\dots,z_m):= \prod_{1\le i < j \le m} |z_i - z_j|\\
\times\int\limits_{\mathbb{R}^{n-m+1}} \prod_{i=0}^{n}f_i\left(\sum_{j=0}^{n-m}(-1)^{m-i+j}\sigma_{m-i+j}(z_1,\dots,z_m)t_j\right) \prod_{i=1}^m \Bigg|\sum_{j=0}^{n-m} t_j z_i^j\Bigg|\dd t_0\dots \dd t_{n-m},
\end{multline}
where we used the following notation  for the elementary symmetric polynomials:
\begin{equation}\label{1738}
\sigma_i(z_1,\dots,z_m) :=
\left\{
  \begin{array}{ll}
    1, & \hbox{if}\quad i=0,\\
    \sum_{1\le j_1 < \dots < j_i\le m} z_{j_1} z_{j_2} \dots z_{j_i}, & \hbox{if}\quad 1\leq i\le m, \\
    0, & \hbox{otherwise.}
  \end{array}
\right.
\end{equation}
It is tacitly assumed that the arguments of $f_i$'s are well-defined: we will always consider only the collections of points $(z_1,\dots,z_m)$ such that all symmetric functions of them are real.

We will also need the notation for the absolute value of the  Vandermonde determinant:
\begin{equation}\label{1330}
  \rv_m(z_1,\dots,z_m):=\prod_{1\le i < j \le m} |z_i - z_j|.
\end{equation}

It was proved in~\cite{GKZ15b} that the correlation functions of real zeros are given by~\eqref{eq-rho-b}:
$$
\rho_{k,0}(\bx)=\rho_{k}(\bx)
$$
for all $\bx\in\R^k$. The following theorem generalizes this relation to all mixed $(k,l)$-correlation functions.

\begin{theorem}\label{thm-corr-rel}
For all $(\bx,\bz)\in\R^k\times\C_+^l$,
\begin{equation}\label{eq-corr-rel}
\rho_{k,l}(\bx,\bz) = 2^l \rho_{k+2l}(\bx,\bz,\bar\bz),
\end{equation}
where $\rho_{k+2l}$ is defined in~\eqref{eq-rho-b}.
\end{theorem}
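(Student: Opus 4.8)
The plan is to compute $\rho_{k,l}$ by the same change-of-variables (coarea) argument that yields the case $l=0$ in \cite{GKZ15b}, but now ``extracting'' $l$ conjugate pairs of non-real zeros in addition to the $k$ real ones. Fix pairwise disjoint Borel sets $B_1,\dots,B_k\subset\R$ and $B_{k+1},\dots,B_{k+l}\subset\C_+$. With probability one $G$ has only simple zeros, and for each choice of an ordered $k$-tuple of real zeros and an ordered $l$-tuple of zeros in $\C_+$ we may write
\[
G(z)=\prod_{a=1}^{k}(z-x_a)\cdot\prod_{b=1}^{l}(z-z_b)(z-\bar z_b)\cdot H(z),\qquad
H(z)=\sum_{s=0}^{n-k-2l}t_sz^s,
\]
with $z_b=u_b+\mathrm{i}v_b$, $v_b>0$. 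Via the identification $z_b\leftrightarrow(u_b,v_b)$ this exhibits the coefficient vector $(\xi_0,\dots,\xi_n)$ as the image of $(x_1,\dots,x_k,u_1,v_1,\dots,u_l,v_l,t_0,\dots,t_{n-k-2l})\in\R^{n+1}$ under a polynomial map $\Phi$; expanding the product shows that $\xi_i$ is \emph{exactly} the argument of $f_i$ appearing in \eqref{eq-rho-b} for $m=k+2l$ evaluated at the $(k+2l)$-tuple $(x_1,\dots,x_k,z_1,\dots,z_l,\bar z_1,\dots,\bar z_l)$ (whose elementary symmetric functions are real, being conjugation-invariant, so the right-hand side of \eqref{eq-corr-rel} is well-defined). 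Since the $B_i$ are disjoint, $\prod_{i=1}^{k+l}\mu(B_i)$ is precisely the number of preimages of $(\xi_i)$ under $\Phi$ restricted to the parameters with $x_a\in B_a$ and $z_b\in B_{k+b}$, so the area formula for $\Phi$ together with Fubini gives
\[
\E\Bigl[\prod_{i=1}^{k+l}\mu(B_i)\Bigr]
=\int_{B_1}\!\!\cdots\!\int_{B_{k+l}}\Bigl(\int_{\R^{n-k-2l+1}}\prod_{i=0}^{n}f_i(\xi_i)\,\bigl|\det D\Phi\bigr|\,\dd\bt\Bigr)\dd\bx\,\dd\bz .
\]
Comparing with \eqref{eq-rhokl}, this identifies $\rho_{k,l}(\bx,\bz)$ with the inner integral, and since $|H(\bar z_b)|=|H(z_b)|$ the theorem reduces to the Jacobian identity
\[
\bigl|\det D\Phi\bigr|
=2^{l}\,\rv_{k+2l}(\bx,\bz,\bar\bz)\prod_{a=1}^{k}\Bigl|\sum_{s}t_sx_a^{s}\Bigr|\prod_{b=1}^{l}\Bigl|\sum_{s}t_sz_b^{s}\Bigr|^{2},
\]
whose right-hand side is $2^{l}$ times the non-density factor of the integrand of $\rho_{k+2l}(\bx,\bz,\bar\bz)$ in \eqref{eq-rho-b}.

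To prove this identity I would factor $\Phi=\beta_2\circ\beta_1\circ\alpha$, where $\alpha$ replaces each $(u_b,v_b)$ by the coefficients $(p_b,q_b)=(-2u_b,u_b^2+v_b^2)$ of the quadratic $z^2+p_bz+q_b=(z-z_b)(z-\bar z_b)$ (and is the identity on the remaining variables); $\beta_1$ sends $(x_1,\dots,x_k,p_1,q_1,\dots,p_l,q_l)$ to the $k+2l$ non-leading coefficients of the monic polynomial $M(z):=\prod_{a}(z-x_a)\prod_{b}(z^2+p_bz+q_b)$; and $\beta_2$ is the multiplication map carrying the coefficients of $M$ together with $t_0,\dots,t_{n-k-2l}$ to the coefficients of $G=MH$. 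A direct computation gives $|\det D\alpha|=\prod_{b=1}^{l}|{-4v_b}|=4^{l}\prod_{b=1}^{l}v_b$. For $\beta_1$ and $\beta_2$ I would invoke the classical fact that the Jacobian of the multiplication map of monic polynomials equals, up to sign, the product of the pairwise resultants of the factors (for $\beta_2$, with $M$ monic, this is $\pm\operatorname{Res}(M,H)=\pm\prod_{M(w)=0}H(w)$); evaluating the resultants yields
\[
|\det D\beta_1|=\prod_{a<a'}|x_a-x_{a'}|\cdot\prod_{a,b}|x_a-z_b|\,|x_a-\bar z_b|\cdot\prod_{b<b'}|z_b-z_{b'}|^{2}\,|z_b-\bar z_{b'}|^{2},
\]
\[
|\det D\beta_2|=\prod_{a=1}^{k}|H(x_a)|\cdot\prod_{b=1}^{l}|H(z_b)H(\bar z_b)|=\prod_{a=1}^{k}|H(x_a)|\cdot\prod_{b=1}^{l}|H(z_b)|^{2}.
\]

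It then remains to match these against the Vandermonde factor of the doubled configuration. Expanding $\rv_{k+2l}(\bx,\bz,\bar\bz)$ over all pairs of $(x_1,\dots,x_k,z_1,\dots,z_l,\bar z_1,\dots,\bar z_l)$: the pairs $\{x_a,x_{a'}\}$, $\{x_a,z_b\}$ and $\{x_a,\bar z_b\}$ give exactly the first two products in $|\det D\beta_1|$; the pairs $\{z_b,z_{b'}\}$, $\{\bar z_b,\bar z_{b'}\}$ and $\{z_b,\bar z_{b'}\}$ with $b\neq b'$ come in conjugate duos giving $|z_b-z_{b'}|^{2}$ and $|z_b-\bar z_{b'}|^{2}$; and the $l$ diagonal pairs $\{z_b,\bar z_b\}$ give $\prod_{b}|z_b-\bar z_b|=\prod_{b}2v_b=2^{l}\prod_{b}v_b$. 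Hence $\rv_{k+2l}(\bx,\bz,\bar\bz)=2^{l}\bigl(\prod_{b}v_b\bigr)\,|\det D\beta_1|$, and by the chain rule
\[
|\det D\Phi|=4^{l}\Bigl(\prod_{b}v_b\Bigr)|\det D\beta_1|\cdot\prod_{a}|H(x_a)|\prod_{b}|H(z_b)|^{2}
=2^{l}\,\rv_{k+2l}(\bx,\bz,\bar\bz)\prod_{a}|H(x_a)|\prod_{b}|H(z_b)|^{2},
\]
which is the asserted identity; the theorem follows.

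The routine but delicate points are the measure-theoretic bookkeeping of the first step — that the multiplicity of $\Phi$-preimages matches the combinatorics of $\prod_{i=1}^{k+l}\mu(B_i)$, that the exceptional locus (multiple zeros, $H$ vanishing at a prescribed root, $\deg H<n-k-2l$) is Lebesgue-null and hence negligible, and that $\Phi$ is a local diffeomorphism off this locus — all of which run exactly parallel to the $l=0$ case in \cite{GKZ15b}. The genuinely new content, and the only place I expect real care to be needed, is the Jacobian computation, and within it the accounting of the constant $2^{l}$: passing from the geometric parameters $(u_b,v_b)$ to the quadratic coefficients $(p_b,q_b)$ produces $4^{l}\prod_{b}v_b$, whereas the enlarged Vandermonde absorbs only $2^{l}\prod_{b}v_b$ from the diagonal conjugate pairs, and the surviving factor $2^{l}$ is precisely the constant in \eqref{eq-corr-rel}.
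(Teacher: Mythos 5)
Your proof is correct, but it follows a genuinely different route from the paper's. The paper never parametrizes the whole coefficient vector by roots: it conditions on the tail coefficients $\xi_{k+2l},\dots,\xi_n$, solves the linear system \eqref{2018} for $(\xi_0,\dots,\xi_{k+2l-1})$ via $V^{-1}(\bx,\bz)$ to get the map $\be$, and applies the coarea formula only in the $k+2l$ root coordinates; the Jacobian $|\det J_{\be}|$ is then obtained by implicit differentiation of the linear relation (Lemma~\ref{1200}), with the factor $2^l$ coming from $|\det V(\bx,\bz)|=2^{-l}\rv_{k+2l}(\bx,\bz,\bar\bz)$, and a further \emph{linear}, unit-Jacobian change of variables \eqref{1819} in the remaining $s$-integration is needed to recognize $\rho_{k+2l}$. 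You instead apply the area formula once in all $n+1$ variables to the full root-plus-cofactor parametrization $\Phi$, and compute $|\det D\Phi|$ multiplicatively via the factorization $\beta_2\circ\beta_1\circ\alpha$ and the resultant formula for Jacobians of polynomial multiplication; your bookkeeping of the constant (the $4^l\prod_b v_b$ from $(u_b,v_b)\mapsto(p_b,q_b)$ against the $2^l\prod_b v_b$ absorbed by the diagonal conjugate pairs of the Vandermonde) is exactly right, and your $|\det D\beta_2|=\prod_a|H(x_a)|\prod_b|H(z_b)|^2$ reproduces the factors $\prod_i\bigl|\sum_j t_jz_i^j\bigr|$ in \eqref{eq-rho-b}. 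Your version is a one-shot computation in which the origin of $2^l$ is particularly transparent, but it leans on two inputs you only assert: the classical identity expressing the Jacobian of the multiplication map of (monic) polynomials as a product of pairwise resultants, which deserves a proof or a precise reference, and the a.e.\ local-diffeomorphism/negligible-exceptional-locus check for $\Phi$. The paper's version avoids both by staying within linear algebra (the only nonlinear Jacobian needed is that of $\be$, and the subsequent substitution \eqref{1819} is triangular with unit determinant), at the cost of splitting the argument into two lemmas and a less direct appearance of the constant.
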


It is interesting to note that essentially the correlations between real zeros and the correlations between complex zeros are given by the same function $\rho_m$. In particular, $\rho_2$ provides the  formula for the density of complex zeros as well as for the two-point correlation function of real zeros:
$$
\rho_{0,1}(z)=2\rho_2(z,\bar z),\quad \rho_{2,0}(x,y)=\rho_2(x,y).
$$
The latter formula (with different notations) was obtained in~\cite{dZ05}.

The proof of Theorem~\ref{thm-corr-rel} is be given in Section~\ref{1245}. In the next section we derive some corollaries.

\section{Applications of Theorem~\ref{thm-corr-rel}}

Substituting~\eqref{eq-rho-b} into~\eqref{eq-corr-rel} gives
\begin{multline*}
  \rho_{k,l}(\bx,\bz) =2^l\rv_{k+2l}(\bx,\bz,\bar\bz)\int\limits_{\mathbb{R}^{n-k-2l+1}} \prod_{i=0}^{n}f_i\left(\sum_{j=0}^{n-k-2l}(-1)^{k-i+j}t_j\sigma_{k+2l-i+j}(\bx,\bz,\bar\bz)\right)  \\
    \times \prod_{i=1}^k \Bigg|\sum_{j=0}^{n-k-2l} t_j x_i^j\Bigg|\cdot \prod_{i=1}^l \Bigg|\sum_{j=0}^{n-k-2l} t_j z_i^j\Bigg|^2\dd t_0\dots \dd t_{n-k-2l}.
\end{multline*}
In particular, taking $k=1,l=0$  yields the formula for the density of real zeros:
\begin{align*}
  \rho_{1,0}(x) =\int\limits_{\mathbb{R}^{n}} \prod_{i=0}^{n}f_i\left(t_{i-1}-xt_i\right)\Bigg|\sum_{j=0}^{n-1} t_j x^j\Bigg|\dd t_0\dots \dd t_{n-1},
\end{align*}
where we set $t_{-1}=t_n:=0$.

Taking $k=0,l=1$ yields the formula for the density of complex zeros:
\begin{align*}
  \rho_{0,1}(z)=4|\Im z|\int\limits_{\mathbb{R}^{n-1}} \prod_{i=0}^{n}f_i(t_{i-2}-2t_{i-1}\Re z+t_i|z|^2)
    \Bigg|\sum_{j=0}^{n-2} t_j z^j\Bigg|^2\dd t_0\dots \dd t_{n-2},
\end{align*}
where we set $t_{-2}=t_{-1}=t_{n-1}=t_n:=0$.

Taking $k=n-2l$ we obtain the (non-normalized) joint density of all zeros given that $G$ has exactly $n-2l$ real zeros:
\begin{align}\label{1638}
  \rho_{n-2l,l}(\bx,\bz)=2^l\rv_{n}(\bx,\bz,\bar\bz)\int\limits_{\R }|t|^n \prod_{i=0}^{n}f_i\left((-1)^{n-i}t\sigma_{n-i}(\bx,\bz,\bar\bz)\right)\dd t.
\end{align}

It is easy to derive {from~\eqref{eq-rhokl}} that
\begin{align*}
  \E\left[\frac{\mu(\R)!}{(\mu(\R)-n+2l)!}\frac{\mu(\C_+)!}{(\mu(\C_+)-l)!}\right]=\int_{\R^{n-2l}}\int_{\C^{l}_+}\rho_{n-2l,l}(\bx,\bz)\dd\bx\dd\bz,
\end{align*}
where we used the convention {$0!:=1$ and $q!:=\infty$ for any integer $q < 0$}.
Since with probability one $\mu(\R)+2\mu(\C_+)=n$, the random variable under expectation is non-zero if and only if $\mu(\R)=n-2l$. Thus we obtain the probability that $G$  has exactly $n-2l$ real zeros:
\begin{multline*}
 \P[\mu(\R)=n-2l]=\frac{2^l}{l!(n-2l)!}\int_{\R^{n-2l}}\int_{\C^{l}_+}\rv_{n}(\bx,\bz,\bar\bz)\\\times\int\limits_{\R }|t|^n \prod_{i=0}^{n}f_i\left((-1)^{n-i}t\sigma_{n-i}(\bx,\bz,\bar\bz)\right)\dd t\dd\bx\dd\bz.
\end{multline*}
This formula was obtained in~\cite{dZ04}. Now we consider some examples.

{\bf Uniformly distributed coefficients.}
In algebraic number theory, random polynomials with independent and uniformly distributed on $[-1,1]$ coefficients are of a special interest (see~\cite{dK14}, \cite{fGdZ14}, \cite{GKZ15}). Let us apply Theorem~\ref{thm-corr-rel} to this case.

Suppose that
$$
f_i=\frac12\mathbbm{1}[-1,1],\quad i=0,\dots,n.
$$
Then it follows from Theorem~\ref{thm-corr-rel} that
\begin{multline*}
\rho_{k,l}(\mathbf{x}) = 2^{l-n-1}\rv_{k+2l}(\bx,\bz,\bar\bz)\\\times\int\limits_{D_{\bx,\bz}}\prod_{i=1}^k \Bigg|\sum_{j=0}^{n-k-2l} t_j x_i^j\Bigg|\cdot \prod_{i=1}^l \Bigg|\sum_{j=0}^{n-k-2l} t_j z_i^j\Bigg|^2\dd t_0\dots \dd t_{n-k-2l},
\end{multline*}
where the domain of integration $D_{\bx,\bz}$ is defined as
\begin{align*}
D_{\bx,\bz}:=\Bigg\{(t_0,\dots,t_{n-k-2l})\in\R^{n-k-2l+1}:\max_{0\leq i\leq n}\Bigg|\sum_{j=0}^{n-k-2l}(-1)^{k-i+j}t_j\sigma_{k+2l-i+j}(\bx,\bz,\bar\bz)\Bigg|\leq1
\Bigg\}.
\end{align*}
In particular,
$$
\rho_{n-2l,l}(\bx,\bz) =\frac{2^{l-n}}{n+1}\cdot\frac{\rv_n(\bx,\bz,\bar\bz)}{(\max_{0\leq i\leq n}|\sigma_i(\bx,\bz,\bar\bz)|)^{n+1}}.
$$

{\bf Gaussian distribution.}
Suppose that
\[
f_i(t) = \frac{1}{\sqrt{2\pi}v_i } \exp\left(- \frac{t^2}{2v_i^2}\right),\quad i=0,\dots,n.
\]
Using the formula for the $n$-th absolute moment of the Gaussian distribution, we derive from~\eqref{1638} that
$$
\rho_{n-2l,l}(\bx,\bz) =\frac{2^{l+1/2} \Gamma\left(\frac{n+1}{2}\right)}{(2\pi)^{n/2} v_0\dots v_n}
 \left(\sum_{i=0}^n\frac{\sigma_{n-i}^2(\bx,\bz,\bar\bz)}{v_i^2}\right)^{-(n+1)/2}\rv_{n}(\bx,\bz,\bar\bz).
$$
In particular, for i.i.d coefficients we have
$$
\rho_{n-2l,l}(\bx,\bz)  =\frac{2^{l+1/2} \Gamma\left(\frac{n+1}{2}\right)}{(2\pi)^{n/2} }
 \left(\sum_{i=0}^n\sigma_{n-i}^2(\bx,\bz,\bar\bz)\right)^{-(n+1)/2}\rv_{n}(\bx,\bz,\bar\bz).
$$

%%%%%%%%%%%%%%%%%%%%%%%%%%%%%%%%%%%%%%%%%%%%%%%%%%%%%%%%%%%

{\bf Exponential distribution.}
Suppose that
\[
f_i(t) =\exp(-t )\mathbbm{1}\{t\geq0\},\quad i=0,\dots,n.
\]
By an elementary integration,~\eqref{1638} implies
$$
\rho_{n-2l,l}(\bx,\bz) = \left(\sum_{i=0}^n (-1)^i\sigma_i(\bx,\bz,\bar\bz)\right)^{-(n+1)}\rv_n(\bx,\bz,\bar\bz)\ind{\{(-1)^{i}\sigma_{i}(\bx,\bz,\bar\bz)\geq0\}}.
$$
Using the trivial identity
$$
\sum_{i=0}^n (-1)^i\sigma_i(w_1,\dots,w_n)=\prod_{i=1}^n(1-w_i),
$$
we obtain
$$
\rho_{n-2l,l}(\bx,\bz) = \frac{n!\,\rv_n(\bx,\bz,\bar\bz) \, \ind{\{(-1)^{i}\sigma_{i}(\bx,\bz,\bar\bz)\geq0\}}}{\left(\prod_{i=1}^{n-2l}(1-x_i)\prod_{i=1}^{l}(1-2\Re z_i+|z_i|^2)\right)^{n+1}}.
$$

Random polynomials with  i.i.d. exponential coefficients have been investigated  in~\cite{wL11}, \cite{GZ13}.

%
%$$
%\sum_{k=0}^n\frac{\sigma_{k}^2}{{n\choose k}}=1+\frac{x_1^2+x_2^2+2x_1x_2}{2}+x_1^2x_2^2
%$$
%$$
%D=  x_2^2-4(1/2+x_2^2)(1+x_2^2/2)
%$$
\section{Proof of Theorem~\ref{thm-corr-rel}}\label{1245}

\subsection{{Preliminaries}}

Suppose that $x_1,\dots,x_k\in\R$ and $z_1,\dots,z_l\in\C_+$ are different zeros of $G$.
% Representing non-real zeros in the polar form $z_j=r_je^{i\varphi_j}$, we have
%$$
%\sum_{i=0}^{n}\xi_i r_j^i \cos(k\varphi)=0,\quad \sum_{i=0}^{n}\xi_i r^J \sin(k\varphi)=0,\quad j=1,\dots,l.
%$$
It means that
\begin{equation}\label{2018}
\begin{pmatrix}
1 & x_1 & \dots & x_1^{n} \\
\vdots & \vdots & \ddots & \vdots \\
1 & x_k & \dots & x_k^{n} \\
1 & \Re z_1 & \dots & \Re z_1^{n} \\
0 & \Im z_1 & \dots & \Im z_1^{n} \\
\vdots & \vdots & \ddots & \vdots \\
1 & \Re z_l & \dots & \Re z_l^{n} \\
0 & \Im z_l & \dots & \Im z_l^{n} \\
\end{pmatrix}
\begin{pmatrix}
\xi_0\\
\vdots\\
\xi_n
\end{pmatrix}
=\mathbf{0}.
\end{equation}

Denote by $V(\bx,\bz)$ {the real Vandermonde type} matrix
$$
V(\bx,\bz):=
\begin{pmatrix}
1 & x_1 & \dots & x_1^{k+2l} \\
\vdots & \vdots & \ddots & \vdots \\
1 & x_k & \dots & x_k^{k+2l} \\
1 & \Re z_1 & \dots & \Re z_1^{k+2l} \\
0 & \Im z_1 & \dots & \Im z_1^{k+2l} \\
\vdots & \vdots & \ddots & \vdots \\
1 & \Re z_l & \dots & \Re z_l^{k+2l} \\
0 & \Im z_l & \dots & \Im z_l^{k+2l} \\
\end{pmatrix}.
$$
Then,~\eqref{2018} is equivalent to
\begin{equation}\label{1400}
\begin{pmatrix}
\sum_{j=k+2l}^n\xi_jx_1^j\\
\vdots\\
\sum_{j=k+2l}^n\xi_jx_k^j\\
\Re\sum_{j=k+2l}^n\xi_jz_1^j\\
\Im\sum_{j=k+2l}^n\xi_jz_1^j\\
\vdots\\
\Re\sum_{j=k+2l}^n\xi_jz_l^j\\
\Im\sum_{j=k+2l}^n\xi_jz_l^j\\
\end{pmatrix}
=-V(\bx,\bz)
\begin{pmatrix}
\xi_{0}\\
\vdots\\
\xi_{k+2l-1}
\end{pmatrix}.
\end{equation}
It is easy to check that $V(\bx,\bz)$ satisfies
\begin{equation}\label{1931}
|\det V(\bx,\bz)|=2^{-l}\rv_{k+2l}(\bx,\bz,\bar\bz),
\end{equation}
where $\rv_{k+2l}$ is defined in~\eqref{1330}.

Consider a random function $\be=(\eta_0,\dots,\eta_{k+2l-1})^T:\R^{k}\times\C^l\to\R^{k+2l}$ defined as
\begin{equation}\label{2327}
  \be(\bx,\bz):=
  -V^{-1}(\bx,\bz)
\begin{pmatrix}
\sum_{j=k+2l}^n\xi_jx_1^j\\
\vdots\\
\sum_{j=k+2l}^n\xi_jx_k^j\\
\Re\sum_{j=k+2l}^n\xi_jz_1^j\\
\Im\sum_{j=k+2l}^n\xi_jz_1^j\\
\vdots\\
\Re\sum_{j=k+2l}^n\xi_jz_l^j\\
\Im\sum_{j=k+2l}^n\xi_jz_l^j\\
\end{pmatrix}.
\end{equation}
%\begin{equation}\label{2327}
%  \mathbf{\eta}(\bx,\bz)=
%  -V^{-1}(\bx,\bz)
%\begin{pmatrix}
%\sum_{j=k+2l}^n\xi_jx_1^j\\
%\vdots\\
%\sum_{j=k+2l}^n\xi_jx_k^j\\
%\sum_{j=k+2l}^n\xi_jz_1^j\\
%\vdots\\
%\sum_{j=k+2l}^n\xi_jz_l^j\\
%\end{pmatrix}.
%\end{equation}
It follows from~\eqref{1400} that~\eqref{2018} is equivalent to
\begin{equation}\label{1404}
\be(\bx,\bz)=
 \begin{pmatrix}
\xi_0\\
\vdots\\
\xi_{k+2l-1}
\end{pmatrix}.
\end{equation}

Consider a random function $\varphi:\R^k\times\C^{l}\to\R$ defined as
\begin{equation}\label{eq-phi-def}
\begin{aligned}
\varphi(\bx,\bz):=\frac1{\rv_{k+2l}(\bx,\bz)}&\prod_{i=1}^{k}\Bigg|\sum_{j=0}^{k+2l-1}j\eta_j(\bx,\bz)x_i^{j-1} + \sum_{j=k+2l}^{n}j\xi_j x_i^{j-1}\Bigg|\\
&\times\prod_{i=1}^{l}\Bigg|\sum_{j=0}^{k+2l-1}j\eta_j(\bx,\bz)z_i^{j-1} + \sum_{j=k+2l}^{n}j\xi_j z_i^{j-1}\Bigg|^2.
\end{aligned}
\end{equation}

\begin{lemma}
%  Let $m=k+2l$.
  For all $(\bx,\bz)\in\R^k\times\C^{l}$,
\begin{equation}\label{1835}
 \E\left[\varphi(\bx,\bz)\prod_{i=0}^{k+2l-1}f_i(\eta_i(\bx,\bz))\right] =  \rho_{k+2l}(\bx,\bz,\bar\bz).
\end{equation}
\end{lemma}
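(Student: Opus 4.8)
The plan is to compute the left-hand side of \eqref{1835} directly from the definition \eqref{eq-rho-b} of $\rho_{k+2l}(\bx,\bz,\bar\bz)$ by recognizing the integral over $t_0,\dots,t_{n-k-2l}$ in \eqref{eq-rho-b} as an expectation over the ``free'' coefficients, after a change of variables. First I would observe that in \eqref{eq-rho-b} with $m=k+2l$ and arguments $(z_1,\dots,z_m)=(\bx,\bz,\bar\bz)$, the elementary symmetric functions $\sigma_{m-i+j}(\bx,\bz,\bar\bz)$ are all real (complex zeros come in conjugate pairs), so the quantity $\sum_{j=0}^{n-m}(-1)^{m-i+j}\sigma_{m-i+j}(\bx,\bz,\bar\bz)t_j$ is exactly the $i$-th coordinate of the linear map sending $(t_0,\dots,t_{n-m})$ to the coefficient vector of the polynomial $\left(\prod_{i=1}^k(X-x_i)\right)\left(\prod_{i=1}^l(X-z_i)(X-\bar z_i)\right)\cdot\sum_{j=0}^{n-m}t_jX^j$. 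In other words, if we write $P(X):=\prod (X-\text{root})$ for the monic degree-$m$ polynomial with the prescribed zeros and $Q(X):=\sum_{j=0}^{n-m}t_jX^j$, then the argument of $f_i$ is the $i$-th coefficient of $P\cdot Q$.

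Next I would make the substitution that identifies $(t_0,\dots,t_{n-m})$ with the top coefficients $(\xi_{m},\dots,\xi_{n})$ of $G$. Concretely, when $x_1,\dots,x_k,z_1,\dots,z_l$ are zeros of $G$, we have $G(X)=P(X)Q(X)$ with $Q(X)=\sum_{j=m}^{n}\xi_j X^{j-m}$, i.e.\ $t_j=\xi_{j+m}$; the first $m$ coefficients of $G$ are then the entries of $\be(\bx,\bz)$ by \eqref{1404}, and $f_i$ evaluated at the $i$-th coefficient of $PQ$ is precisely $f_i(\eta_i)$ for $i<m$ and $f_i(\xi_i)$ for $i\ge m$. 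So the product $\prod_{i=0}^n f_i(\cdots)$ in \eqref{eq-rho-b} becomes $\prod_{i=0}^{m-1}f_i(\eta_i(\bx,\bz))\cdot\prod_{i=m}^n f_i(\xi_i)$, and the integral $\int \cdots\,\dd t_0\cdots\dd t_{n-m}$ becomes $\E$ over $\xi_m,\dots,\xi_n$ (using that these have joint density $\prod_{i=m}^n f_i$) of the integrand. I would also check that the product of moduli $\prod_{i=1}^m|\sum_j t_jz_i^j|$ in \eqref{eq-rho-b} equals $\prod_{i=1}^m|Q(z_i)|$, and that since $G=PQ$ and $P$ vanishes at the $z_i$, we have $G'(z_i)=P'(z_i)Q(z_i)$; moreover $P'$ at a real root $x_i$ is $\pm\rv_{k+2l}$-related and at $z_i$ gives the conjugate pair. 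Factoring $\prod_{i=1}^m|Q(z_i)|=\prod_{i=1}^m|G'(\text{root}_i)|/\prod_{i=1}^m|P'(\text{root}_i)|$ and noting $\prod_i|P'(\text{root}_i)|$ is (up to the conjugate-pair bookkeeping) the discriminant $\rv_{k+2l}(\bx,\bz,\bar\bz)^2$ over $\rv_{k+2l}(\bx,\bz)$, I expect the prefactor $\rv_{k+2l}(\bx,\bz,\bar\bz)=\prod_{i<j\le m}|z_i-z_j|$ in \eqref{eq-rho-b} together with $1/\rv_{k+2l}(\bx,\bz)$ in \eqref{eq-phi-def} to combine so that $\varphi(\bx,\bz)$ is exactly $\prod_{i=1}^m |Q(\text{root}_i)|$-type object appearing in \eqref{eq-rho-b}, namely $\prod_{i=1}^k|Q(x_i)|\cdot\prod_{i=1}^l|Q(z_i)|^2$ divided by the appropriate Vandermonde. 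Here one uses that $\sum_{j=0}^{k+2l-1}j\eta_j x_i^{j-1}+\sum_{j=k+2l}^n j\xi_j x_i^{j-1}=G'(x_i)$ on the event $\be=(\xi_0,\dots,\xi_{m-1})$, but since $\eta_j$ and the $\xi_j$ are plugged in as the definition regardless, the derivative expression is literally the derivative of $P\cdot Q$ with $Q$'s coefficients being the free $\xi$'s — so one should verify the factorization $\frac{d}{dX}(PQ)\big|_{\text{root of }P}=P'\cdot Q$ holds identically.

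The main obstacle I anticipate is the careful bookkeeping of the Vandermonde/discriminant factors and the powers of $2$ — matching $\rv_{k+2l}(\bx,\bz,\bar\bz)$, $\rv_{k+2l}(\bx,\bz)$, the $|\det V(\bx,\bz)|=2^{-l}\rv_{k+2l}(\bx,\bz,\bar\bz)$ from \eqref{1931}, and the factor coming from expressing $\prod|P'(\text{root})|$ in terms of the Vandermonde of $(\bx,\bz,\bar\bz)$ versus the ``real'' Vandermonde $\rv_{k+2l}(\bx,\bz)$ used in \eqref{eq-phi-def}. Concretely, I would write $\rv_{k+2l}(\bx,\bz,\bar\bz)$ as a product over all pairs among $\{x_i\}\cup\{z_i,\bar z_i\}$ and $\rv_{k+2l}(\bx,\bz)$ (which I read as the Vandermonde in the $k+2l$ variables obtained by splitting each $z_i$ into $\Re z_i,\Im z_i$, consistent with $V(\bx,\bz)$) and track how $\prod_{i}|P'|$ relates them; the expected outcome is that all extra constants cancel and \eqref{1835} holds on the nose. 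Once the algebraic identity $\varphi(\bx,\bz)\prod_{i=0}^{m-1}f_i(\eta_i)=\rv_{k+2l}(\bx,\bz,\bar\bz)\cdot\big(\text{integrand of \eqref{eq-rho-b} at }t_j=\xi_{j+m}\big)\cdot\big(\prod_{i=m}^n f_i(\xi_i)\big)^{-1}\cdot\prod_{i=m}^n f_i(\xi_i)$ is established pointwise, taking $\E$ and using independence of $(\xi_m,\dots,\xi_n)$ with density $\prod_{i=m}^n f_i$ turns the expectation into the Lebesgue integral in \eqref{eq-rho-b}, yielding $\rho_{k+2l}(\bx,\bz,\bar\bz)$ and completing the proof.
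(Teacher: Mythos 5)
Your overall route is the same as the paper's: recognize the arguments of the $f_i$ in \eqref{eq-rho-b} as the coefficients of $P\cdot Q$ with $P$ the monic polynomial vanishing at $\bx,\bz,\bar\bz$ and $Q(X)=\sum_{j=0}^{n-k-2l}t_jX^j$, convert the $t$-integral into the expectation over the free coefficients by a change of variables, and use $(PQ)'=P'Q$ at the roots of $P$ to turn the derivative products in \eqref{eq-phi-def} into $\prod_i|Q(x_i)|\prod_i|Q(z_i)|^2$ times a ratio of Vandermondes. However, one step as you state it is false and would break the computation if executed literally: the identification $t_j=\xi_{j+m}$, i.e.\ the claim that $Q(X)=\sum_{j=m}^{n}\xi_jX^{j-m}$. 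Since $P$ is monic of degree $m=k+2l$ but not equal to $X^m$, the coefficients of $Q$ in the factorization $G=PQ$ are \emph{not} the top coefficients of $G$; only the leading one agrees ($q_{n-m}=\xi_n$), and the rest are triangular combinations, e.g.\ $q_{n-m-1}=\xi_{n-1}+\sigma_1\xi_n$. Under your literal substitution the argument of $f_i$ for $i\ge m$ would be $\sum_j(-1)^{m-i+j}\sigma_{m-i+j}(\bx,\bz,\bar\bz)\,\xi_{j+m}$, which is not $\xi_i$, so the integral would not factor into an expectation against $\prod_{i\ge m}f_i(\xi_i)$. The correct substitution is the one your own earlier sentence implicitly contains: set $s_i$ equal to the $i$-th coefficient of $PQ$, that is $s_i=\sum_{j}(-1)^{k+2l-i+j}\sigma_{k+2l-i+j}(\bx,\bz,\bar\bz)t_j$ for $i=k+2l,\dots,n$ (this is \eqref{1819}); this map is triangular with unit diagonal, so its Jacobian is $1$ — a point you must also record, since without it the passage from $\dd t$ to $\dd s$ is not justified. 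One then checks, exactly as in \eqref{1817}--\eqref{1846}, that the deterministic versions $r_i$ of the $\eta_i$ coincide with the coefficients of $PQ$ for $i<k+2l$, which is what makes $f_i(\eta_i)=f_i((PQ)_i)$ for $i<k+2l$.

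On the Vandermonde bookkeeping you were unsure about: the normalization $\rv_{k+2l}(\bx,\bz)$ in \eqref{eq-phi-def} is to be read as $\rv_{k+2l}(\bx,\bz,\bar\bz)$ (the subscript counts variables, the conjugates being implied), not as a Vandermonde in the split real coordinates $\Re z_i,\Im z_i$. With that reading everything closes on the nose: $\prod_{i=1}^k|P'(x_i)|\prod_{i=1}^l|P'(z_i)|^2=\rv_{k+2l}(\bx,\bz,\bar\bz)^2$, so $\varphi=\rv_{k+2l}(\bx,\bz,\bar\bz)\prod_{i=1}^k|Q(x_i)|\prod_{i=1}^l|Q(z_i)|^2$, which matches \eqref{eq-rho-b} exactly, with no stray powers of $2$; the factor $2^l$ enters only later through $|\det V(\bx,\bz)|=2^{-l}\rv_{k+2l}(\bx,\bz,\bar\bz)$ in Lemma~\ref{1200}, not in this lemma.
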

\begin{proof}
The idea of the proof goes back to~\cite[pp. 58--59]{dK12-Trudy} (see also~\cite[Lemmas~5, 6]{dK14}).

By definition of the expected value,
\begin{align}\label{1850}
\E\left[\varphi(\bx,\bz)\prod_{i=0}^{k+2l-1}f_i(\eta_i(\bx,\bz))\right]=&\frac1{\rv_{k+2l}(\bx,\bz)}\\
\times&\int_{\R^{n-k-2l+1}}\prod_{i=1}^{k}\Bigg|\sum_{j=0}^{k+2l-1}j r_j(\bx,\bz,\bs)x_i^{j-1} + \sum_{j=k+2l}^{n}j s_j x_i^{j-1}\Bigg|\notag\\
\times&\prod_{i=1}^{l}\Bigg|\sum_{j=0}^{k+2l-1}j r_j(\bx,\bz,\bs)z_i^{j-1} + \sum_{j=k+2l}^{n}j s_j z_i^{j-1}\Bigg|^2\notag\\
\times&\prod_{i=0}^{k+2l-1}f_i(r_i(\bx,\bz,\bs))\prod_{i=k+2l}^{n}f_i(s_i)\dd s_{k+2l}\dots \dd s_n,\notag
\end{align}
where the functions $r_0,\dots,r_{k+2l-1}$ are defined by
\begin{equation}\label{1800}
\begin{pmatrix}
r_0(\bx,\bz,\bs)\\
\vdots\\
r_{k+2l-1}(\bx,\bz,\bs)\\
\end{pmatrix}
:=
  -V^{-1}(\bx,\bz)
\begin{pmatrix}
\sum_{j=k+2l}^ns_jx_1^j\\
\vdots\\
\sum_{j=k+2l}^ns_jx_k^j\\
\Re\sum_{j=k+2l}^ns_jz_1^j\\
\Im\sum_{j=k+2l}^ns_jz_1^j\\
\vdots\\
\Re\sum_{j=k+2l}^ns_jz_l^j\\
\Im\sum_{j=k+2l}^ns_jz_l^j\\
\end{pmatrix}
\end{equation}
and $\bs:=(s_{k+2l},\dots,s_n)$.

Now we make the following change of variables:
\begin{equation}\label{1819}
s_i=\sum_{j=0}^{n-k-2l}(-1)^{k+2l-i+j}\sigma_{k+2l-i+j}(\bx,\bz,\bar{\bz})t_j,\quad i=k+2l,\dots,n.
\end{equation}
where $\sigma_i$'s are defined in~\eqref{1738}. The Jacobian is a lower triangle matrix with unities in the diagonal, hence the determinant is 1. This change came from the fact that $x_1,\dots,x_k,z_1,\bar{z_1}\dots,z_l,\bar{z_l}$ are zeros of the polynomial
$$
g(z):=r_0+r_1z+\dots+r_{k+2l-1}z^{k+2l-1}+s_{k+2l}z^{k+2l}+\dots+s_nz^n,
$$
see~\eqref{1800}. Thus for some $t'_0,\dots,t'_{n-k-2l}$ we have
\begin{align}\label{1817}
g(z)&=\prod_{j=1}^{k}(z-x_j)\prod_{j=1}^{l}(z-z_j)(z-\bar{z_j})\left(\sum_{j=0}^{n-k-2l} t'_j z^j \right)\\
&=\left(\sum_{j=0}^{k+2l} (-1)^{k+2l-j} \sigma_{k+2l-j}(\bx,\bz,\bar{\bz}) z^j\right)\left(\sum_{j=0}^{n-k-2l} t'_j z^j \right)\notag
\end{align}
Comparing the coefficients of the polynomials from the left-hand  and right-hand sides and recalling~\eqref{1819} we obtain that $t'_i=t_i$ for $i=0,\dots,n-k-2l$ and
\begin{equation}\label{1846}
r_i(\bx,\bz,\bs)=\sum_{j=0}^{n-k-2l}(-1)^{k+2l-i+j}\sigma_{k+2l-i+j}(\bx,\bz,\bar{\bz})t_j,\quad i=0,\dots,k+2l-1.
\end{equation}
If we differentiate the first equation in~\eqref{1817} at points $x_i,z_i$, and $\bar{z_i}$, we get
\begin{align}\label{1827}
  \sum_{j=0}^{k+2l-1}j r_jx_i^{j-1} + \sum_{j=k+2l}^{n}j s_j x_i^{j-1} &= \prod_{j\ne i}(x_i-x_j)\prod_{j=1}^{l}(x_i-z_j)(x_i-\bar{z_j})\left(\sum_{j=0}^{n-k-2l} t'_j x_i^j \right), \\
\sum_{j=0}^{k+2l-1}j r_jz_i^{j-1} + \sum_{j=k+2l}^{n}j s_j z_i^{j-1} &= \prod_{j=1}^{k}(z_i-x_j)\prod_{j\ne i}(z_i-z_j)(z_i-\bar{z_j})\left(\sum_{j=0}^{n-k-2l} t'_j z_i^j \right),\notag \\
\sum_{j=0}^{k+2l-1}j r_j\bar{z_i}^{j-1} + \sum_{j=k+2l}^{n}j s_j \bar{z_i}^{j-1} &= \prod_{j=1}^{k}(\bar{z_i}-x_j)\prod_{j\ne i}(\bar{z_i}-z_j)(\bar{z_i}-\bar{z_j})\left(\sum_{j=0}^{n-k-2l} t'_j \bar{z_i}^j \right).\notag
\end{align}
Substituting~\eqref{1819}, \eqref{1846}, and~\eqref{1827} in~\eqref{1850} completes the proof of the lemma.

\end{proof}

Denote by $J_{\be}(\bx,\bz)$ the \emph{real} Jacobian matrix of $\be$ at point  $(\bx,\bz)$:
$$
J_{\be}=
\begin{pmatrix}
\frac{\partial \eta_0 }{\partial x_1} & \dots & \frac{\partial \eta_0 }{\partial x_k} & \frac{\partial \eta_0 }{\partial \Re z_1} &  \frac{\partial \eta_0 }{\partial \Im z_1}&\dots & \frac{\partial \eta_0 }{\partial \Re z_l}& \frac{\partial \eta_0 }{\partial \Im z_l}\\
\vdots & \ddots & \vdots & \vdots &  \vdots&\ddots & \vdots&\vdots\\
\frac{\partial \eta_{k+2l-1} }{\partial x_1} & \dots & \frac{\partial \eta_{k+2l-1} }{\partial x_k} & \frac{\partial \eta_{k+2l-1} }{\partial \Re z_1} &  \frac{\partial \eta_{k+2l-1} }{\partial \Im z_1}&\dots & \frac{\partial \eta_{k+2l-1} }{\partial \Re z_l}& \frac{\partial \eta_{k+2l-1} }{\partial \Im z_l}\\
\end{pmatrix}
.
$$

\begin{lemma}\label{1200}
  For all $(\bx,\bz)\in\R^k\times\C^{l},$
\begin{equation}\label{1233}
|\det J_{\be}(\bx,\bz)|
%=v(\bx,\bz)
% \prod_{i=1}^{k}\Bigg|\sum_{j=0}^{k+2l-1}j\eta_j(\bx)x_i^{j-1} + \sum_{j=k+2l}^{n}j\xi_j x_i^{j-1}\Bigg|\\
% \times\prod_{i=1}^{k}\Bigg|\sum_{j=0}^{k+2l-1}j\eta_j(\bz)z_i^{j-1} + \sum_{j=k+2l}^{n}j\xi_j z_i^{j-1}\Bigg|.
=2^l\varphi(\bx,\bz),
\end{equation}
where $\varphi(\bx,\bz)$ is defined in \eqref{eq-phi-def}.
\end{lemma}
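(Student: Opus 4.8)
The plan is to compute $J_{\be}$ by implicit differentiation of the defining relation~\eqref{1404}, or rather of the equivalent linear system~\eqref{1400}. Writing $M(\bx,\bz)$ for the column vector on the left of~\eqref{1400} (with the $\xi_j$, $j\ge k+2l$, frozen, since only the explicit dependence on $(\bx,\bz)$ is differentiated), we have $V(\bx,\bz)\be(\bx,\bz)=-M(\bx,\bz)$, whence
\[
V(\bx,\bz)\,J_{\be}(\bx,\bz)=-\,\partial_{(\bx,\bz)}\bigl[M(\bx,\bz)+V(\bx,\bz)\be(\bx,\bz)\bigr]\Big|_{\be\ \mathrm{frozen}},
\]
that is, differentiating the full expression $V\be+M=0$ and isolating the term with $J_{\be}$. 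Because each row of $V\be+M$ has the form $P(x_i)$, $\Re P(z_i)$, or $\Im P(z_i)$ for the \emph{random} polynomial
\[
P(w):=\sum_{j=0}^{k+2l-1}\eta_j(\bx,\bz)\,w^{j}+\sum_{j=k+2l}^{n}\xi_j w^{j}
\]
(which vanishes at $x_1,\dots,x_k,z_1,\bar z_1,\dots,z_l,\bar z_l$ by~\eqref{1404}), differentiating row $i$ with respect to a variable \emph{other than} the one attached to that row gives zero (the row is identically zero as a function of $(\bx,\bz)$ near the relevant point only in that one variable — more precisely, $P$ has those points as roots, so $\partial_{x_m}[P(x_i)]=P'(x_i)\delta_{im}+(\partial_{x_m}P)(x_i)$, and the frozen-$\be$ part kills the second term). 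Hence the matrix $-\partial_{(\bx,\bz)}[V\be+M]|_{\be\ \mathrm{frozen}}$ is \emph{block diagonal}, with a $1\times1$ block $-P'(x_i)$ for each real root, and a $2\times2$ block $-\begin{pmatrix}\Re P'(z_i)&-\Im P'(z_i)\\ \Im P'(z_i)&\Re P'(z_i)\end{pmatrix}$ for each complex root (the Cauchy--Riemann structure of differentiating $\Re P$, $\Im P$ in $\Re z_i,\Im z_i$).

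Taking determinants, $\det V(\bx,\bz)\cdot\det J_{\be}(\bx,\bz)$ equals the product of the block determinants, namely $\prod_{i=1}^k(-P'(x_i))\cdot\prod_{i=1}^l\bigl((\Re P'(z_i))^2+(\Im P'(z_i))^2\bigr)=\pm\prod_{i=1}^{k}|P'(x_i)|\cdot\prod_{i=1}^{l}|P'(z_i)|^2$ up to sign. Now invoke~\eqref{1931}: $|\det V(\bx,\bz)|=2^{-l}\rv_{k+2l}(\bx,\bz,\bar\bz)=2^{-l}\rv_{k+2l}(\bx,\bz)\cdot(\text{conjugate factors})$. Since $\rv_{k+2l}(\bx,\bz)$ in~\eqref{eq-phi-def} denotes the Vandermonde of the $k+2l$ points $x_1,\dots,x_k,z_1,\dots,z_l$ — not including conjugates — one checks the bookkeeping identity $\rv_{k+2l}(\bx,\bz,\bar\bz)=\rv_{k+2l}(\bx,\bz)\cdot\prod_{i}\cdots$ so that dividing out gives exactly the reciprocal Vandermonde factor appearing in the definition of $\varphi$. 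Taking absolute values and rearranging,
\[
|\det J_{\be}(\bx,\bz)|=\frac{|\det V|^{-1}\prod_{i=1}^{k}|P'(x_i)|\prod_{i=1}^{l}|P'(z_i)|^2}{1}
=2^{l}\cdot\frac{1}{\rv_{k+2l}(\bx,\bz)}\prod_{i=1}^{k}|P'(x_i)|\prod_{i=1}^{l}|P'(z_i)|^{2},
\]
which is precisely $2^l\varphi(\bx,\bz)$ after recognizing $P'(x_i)=\sum_{j=0}^{k+2l-1}j\eta_j x_i^{j-1}+\sum_{j=k+2l}^n j\xi_j x_i^{j-1}$ and similarly at $z_i$, matching~\eqref{eq-phi-def}.

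The main obstacle I anticipate is the careful justification that the derivative matrix is genuinely block diagonal: one must be precise that in the expression $V(\bx,\bz)\be(\bx,\bz)+M(\bx,\bz)$, differentiating the $i$-th row with respect to a variable attached to a different root annihilates because that row, as a polynomial evaluated at the root in question, still carries the root — this needs the observation that the \emph{whole} row equals $P$ evaluated at the point, and $P$ (with its current, fixed coefficients $\eta_j,\xi_j$) vanishes at all $k+2l$ of the points $x_1,\dots,z_l$ and their conjugates. Equivalently, one differentiates $P(w)$ as a function of $(\bx,\bz)$ through its coefficients $\eta_j$, but here we want the \emph{explicit} $(\bx,\bz)$-dependence only, so it is cleanest to differentiate the identity $0=P(x_i)=\sum\eta_j x_i^j+\sum\xi_j x_i^j$ treating $\eta_j$ as functions: $0=P'(x_i)\,\partial_{x_m}x_i+\sum_j(\partial_{x_m}\eta_j)x_i^j$, i.e. $-\sum_j(\partial_{x_m}\eta_j)x_i^j=P'(x_i)\delta_{im}$, which is exactly the statement that $V\cdot J_{\be}$ has the claimed block form. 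A secondary bookkeeping point is to match the two conventions for $\rv_{k+2l}$ — the one with conjugates (in~\eqref{1931}) versus the one without (in~\eqref{eq-phi-def}) — but this is a routine factorization of the Vandermonde product.
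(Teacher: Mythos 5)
Your proof is correct and takes essentially the same route as the paper's: implicit differentiation of $V(\bx,\bz)\be(\bx,\bz)=-M(\bx,\bz)$ shows that $V J_{\be}$ equals a block-diagonal matrix with scalar blocks $-P'(x_i)$ and $2\times 2$ Cauchy--Riemann blocks of determinant $|P'(z_i)|^2$, after which one divides by $|\det V|=2^{-l}\rv_{k+2l}(\bx,\bz,\bar\bz)$ from \eqref{1931}. The one point to fix is your reading of the normalization in \eqref{eq-phi-def}: $\rv_{k+2l}(\bx,\bz)$ there is the paper's shorthand for the Vandermonde of all $k+2l$ points $\bx,\bz,\bar\bz$ (the subscript counts the points; compare the same abbreviation $\rho_{k+2l}(\bx,\bz)$ in \eqref{1647}), so it cancels $2^{l}|\det V|$ exactly and no extra ``bookkeeping identity'' is needed --- under your stated reading (conjugates excluded) your final display would not balance, and no such factorization could repair it.
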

\begin{proof}
Differentiating
\[
V(\bx,\bz)\be(\bx,\bz)=
  -
\begin{pmatrix}
x_1^{k+2l} & x_1^{k+2l+1} & \dots & x_1^{n} \\
\vdots & \vdots & \ddots & \vdots \\
x_k^{k+2l} & x_k^{k+2l+1} & \dots & x_k^{n} \\
\Re z_1^{k+2l} & \Re z_1^{k+2l+1} & \dots & \Re z_1^{n} \\
\Im z_1^{k+2l} & \Im z_1^{k+2l+1} & \dots & \Im z_1^{n} \\
\vdots & \vdots & \ddots & \vdots \\
\Re z_l^{k+2l} & \Re z_l^{k+2l+1} & \dots & \Re z_l^{n} \\
\Im z_l^{k+2l} & \Im z_l^{k+2l+1} & \dots & \Im z_l^{n} \\
\end{pmatrix}
\begin{pmatrix}
\xi_{k+2l}\\
\vdots\\
\xi_n
\end{pmatrix}
,
\]
we obtain
$$
  V(\bx,\bz)J_{\be}(\bx,\bz)+
  \begin{pmatrix}
    A_1 & \mathbf{0} \\
     \mathbf{0} &A_2
  \end{pmatrix}
 =- \begin{pmatrix}
    A_3 & \mathbf{0} \\
     \mathbf{0} & A_4
  \end{pmatrix},
$$
where
$$
A_1:=
\begin{pmatrix}
\sum_{j=0}^{k+2l-1}j\eta_jx_1^{j-1} &0& \dots &0 \\
0&\sum_{j=0}^{k+2l-1}j\eta_jx_2^{j-1} & \dots &0\\
\vdots &\vdots & \ddots & \vdots \\
0&0& \dots & \sum_{j=0}^{k+2l-1}j\eta_jx_k^{j-1} \\
\end{pmatrix},
$$
$$
A_2:=
\begin{pmatrix}
\sum_{j=0}^{k+2l-1}\eta_j\frac{\partial\Re z_1^j}{\partial\Re z_1}  & \sum_{j=0}^{k+2l-1}\eta_j\frac{\partial\Re z_1^j}{\partial\Im z_1}  &\dots & 0&0\\
\sum_{j=0}^{k+2l-1}\eta_j\frac{\partial\Im z_1^j}{\partial\Re z_1}  & \sum_{j=0}^{k+2l-1}\eta_j\frac{\partial\Im z_1^j}{\partial\Im z_1}  &\dots & 0&0\\
 \vdots &  \vdots&\ddots & \vdots&\vdots\\
 0&0 &\dots&\sum_{j=0}^{k+2l-1}\eta_j\frac{\partial\Re z_l^j}{\partial\Re z_l}  & \sum_{j=0}^{k+2l-1}\eta_j\frac{\partial\Re z_l^j}{\partial\Im z_l} \\
  0&0 &\dots&\sum_{j=0}^{k+2l-1}\eta_j\frac{\partial\Im z_l^j}{\partial\Re z_l}  & \sum_{j=0}^{k+2l-1}\eta_j\frac{\partial\Im z_l^j}{\partial\Im z_l} \\
\end{pmatrix},
$$
$$
A_3:=
\begin{pmatrix}
\sum_{j=k+2l}^{n}j\xi_jx_1^{j-1} &0& \dots &0 \\
0&\sum_{j=k+2l}^{n}j\xi_jx_2^{j-1} & \dots &0\\
\vdots &\vdots & \ddots & \vdots \\
0&0& \dots & \sum_{j=k+2l}^{n}j\xi_jx_k^{j-1} \\
\end{pmatrix},
$$
$$
A_4:=
\begin{pmatrix}
\sum_{j=k+2l}^{n}\xi_j\frac{\partial\Re z_1^j}{\partial\Re z_1}  & \sum_{j=k+2l}^{n}\xi_j\frac{\partial\Re z_1^j}{\partial\Im z_1}  &\dots & 0&0\\
\sum_{j=k+2l}^{n}\xi_j\frac{\partial\Im z_1^j}{\partial\Re z_1}  & \sum_{j=k+2l}^{n}\xi_j\frac{\partial\Im z_1^j}{\partial\Im z_1}  &\dots & 0&0\\
 \vdots &  \vdots&\ddots & \vdots&\vdots\\
 0&0 &\dots&\sum_{j=k+2l}^{n}\xi_j\frac{\partial\Re z_l^j}{\partial\Re z_l}  & \sum_{j=k+2l}^{n}\xi_j\frac{\partial\Re z_l^j}{\partial\Im z_l} \\
  0&0 &\dots&\sum_{j=k+2l}^{n}\xi_j\frac{\partial\Im z_l^j}{\partial\Re z_l}  & \sum_{j=k+2l}^{n}\xi_j\frac{\partial\Im z_l^j}{\partial\Im z_l} \\
\end{pmatrix}.
$$

We finish the proof  by taking the second term from the left-hand side to the right-hand side, using~\eqref{1931}, and noting that for any analytic function $f(z)$
$$
 \det\begin{pmatrix}
   \frac{\partial\Re f}{\partial\Re z} &  \frac{\partial\Re f}{\partial\Im z} \\
      \frac{\partial\Im f}{\partial\Re z} &  \frac{\partial\Im f}{\partial\Im z}
  \end{pmatrix}
  =|f'(z)|^2.
$$
%$$
%\det V(\bx)=\prod_{1\leq i<j\leq k}(x_j-x_i).
%$$
\end{proof}

\begin{lemma}[Coarea formula]\label{lm-coarea}
Let $B\subset \R^m$ be a region. Let $\bu:B\to\R^m$ be a Lipschitz function and $h:\R^m\to\R$ be an $L^1$-function. Then
\begin{equation}\label{1225}
\int\limits_{\R^m}\#\{\bx\in B:\bu(\bx)=\by\}\,h(\by)\dd\by=\int\limits_{B}|\det J_{\bu}(\bx)|\, h(\bu(\bx))\dd\bx,
\end{equation}
where $J_{\bu}(\bx)$ is the Jacobian matrix of $\bu(\bx)$.
\end{lemma}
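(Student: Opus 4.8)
The statement is the classical \emph{area formula} (change of variables with multiplicity) for Lipschitz maps between equidimensional Euclidean spaces; the function $N(\by):=\#\{\bx\in B:\bu(\bx)=\by\}$ is its Banach indicatrix. The plan is to reduce to that result, sketching the proof. First I would reduce the general weight $h\in L^1$ to the case of an indicator $h=\ind_A$, $A\subset\R^m$ Borel. Both sides of~\eqref{1225} are linear in $h$ and, for $h\ge 0$, continuous under increasing limits by monotone convergence (on the left because $N\ge 0$, on the right because $|\det J_{\bu}|\ge 0$), so the indicator case extends first to nonnegative simple functions, then to all nonnegative measurable $h$, and finally to general $h\in L^1$ by writing $h=h^+-h^-$, the integrability hypothesis guaranteeing that both resulting integrals are finite. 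Thus it suffices to prove
\[
\int_{A}N(\by)\,\dd\by=\int_{B\cap\bu^{-1}(A)}|\det J_{\bu}(\bx)|\,\dd\bx .
\]

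Next I would invoke Rademacher's theorem: $\bu$ is differentiable $\mathcal{L}^m$-almost everywhere on $B$. Split $B=S\cup Z$, where $S$ is the set of points at which $\bu$ is differentiable with $\det J_{\bu}\ne 0$ and $Z:=B\setminus S$. On $Z$ the right-hand integrand vanishes almost everywhere, so the right-hand side only sees $S$. For the left-hand side I would show $\mathcal{L}^m(\bu(Z))=0$, whence $\int_A N_Z\,\dd\by=0$, where $N_Z$ counts preimages lying in $Z$: the part of $Z$ on which $\bu$ fails to be differentiable is Lebesgue-null and its Lipschitz image is null, while on the critical part one uses $\bu(\bx)=\bu(\bx_0)+J_{\bu}(\bx_0)(\bx-\bx_0)+o(|\bx-\bx_0|)$ with $\det J_{\bu}(\bx_0)=0$ to cover the image of a small cube about $\bx_0$ by a set of arbitrarily small volume (a Sard-type estimate).

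On the regular set $S$ I would run the standard linearization-and-covering argument: fixing $t>1$, decompose $S$ into countably many Borel pieces $E_1,E_2,\dots$ on each of which $\bu$ is injective, both $\bu|_{E_j}$ and its inverse are Lipschitz, and $|\det J_{\bu}|$ oscillates by a factor at most $t^{m}$. On each piece the bi-Lipschitz change of variables — reducible to the $C^1$ case by applying Whitney's extension theorem to the almost-everywhere-defined derivative — gives $\mathcal{L}^m\big(\bu(E_j\cap\bu^{-1}(A))\big)$ equal to $\int_{E_j\cap\bu^{-1}(A)}|\det J_{\bu}|\,\dd\bx$ up to a factor in $[t^{-m},t^{m}]$. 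Summing over $j$, using that $N(\by)=\sum_j\ind_{\bu(E_j)}(\by)$ for almost every $\by\in\bu(S)$ since the pieces and their images are essentially disjoint, and letting $t\downarrow 1$ yields the identity on $S$; combined with the vanishing over $Z$ this gives~\eqref{1225}.

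The hard part is the decomposition of $S$ into countably many pieces on which $\bu$ is bi-Lipschitz with nearly constant Jacobian, together with the bi-Lipschitz change of variables on each piece; the companion fact $\mathcal{L}^m(\bu(Z))=0$ for the critical set is of comparable difficulty. Since all of this is classical, in practice I would simply cite the area formula from a standard reference (Federer's \emph{Geometric Measure Theory} §3.2.3, or Evans--Gariepy) and only record the $L^1$-to-indicator reduction explicitly.
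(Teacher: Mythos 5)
Your proposal is correct, and its bottom line coincides with the paper: the authors give no argument at all, simply citing Federer, \emph{Geometric Measure Theory}, pp.~243--244, exactly as you suggest doing in your final paragraph. Your sketch (reduction of $h\in L^1$ to indicators, Rademacher's theorem, the Sard-type estimate on the critical set, and the decomposition of the regular set into countably many bi-Lipschitz pieces with nearly constant Jacobian) is an accurate outline of the standard proof of the area formula --- which, as you rightly note, is what this ``coarea formula'' really is in the equidimensional case --- so nothing further is needed.
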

\begin{proof}
See~\cite[pp. 243--244]{hF1969}.
\end{proof}

\subsection{Proof}

Now we are ready to finish the proof of Theorem~\ref{thm-corr-rel}. To this end, we  show that  for any family of mutually disjoint Borel subsets $B_1,\dots,B_k\subset\R$ and $B_{k+1},\dots,B_{k+l}\subset\C_+$,
\begin{equation}\label{1647}
\E\left[\prod_{i=1}^{k+l}\mu(B_i)\right]=2^l\int_{B_1}\dots\int_{B_{k+l}}\,\rho_{k+2l}(\bx,\bz)\dd x_1\dots \dd x_k \dd z_1\dots \dd z_l.
\end{equation}
{From~\eqref{1404} we get
\begin{equation}\label{eq-anoth-way}
\E\,\left[\prod_{i=1}^{k+l}\mu(B_i)\right]
=\E\left[\#\{(\bx,\bz)\in B_1\times\dots\times B_{k+l}:\eta(\bx,\bz)=(\xi_0,\dots,\xi_{k+2l-1})\}\right].
\end{equation}}
For $j=1,\dots,l$ denote by $\tilde B_{k+j}$ a ``real counterpart'' of $B_{k+j}$:
$$
\tilde B_{k+j}:=\{(x,y)\in\R^2:x+iy\in B_{k+j}\}.
$$
Let us apply Lemma~\ref{lm-coarea} {to \eqref{eq-anoth-way}} with
$$
m=k+2l,\quad B=B_1\times\dots\times B_k\times \tilde B_{k+1}\times\dots\times \tilde B_{k+l},
$$
$$
\bu(x_1,\dots,x_{k+2l})=\be(x_1,\dots,x_k,x_{k+1}\pm ix_{k+2},\dots,x_{k+2l-1}\pm ix_{k+2l}),
$$
$$
{h(y_0,\dots,y_{k+2l-1})=f_0(y_0)\dots f_{k+2l-1}(y_{k+2l-1}).}
$$
Note that the indices of $y_i$'s are shifted by 1 according to the polynomial coefficients numeration.
{So due to Lemma~\ref{lm-coarea}, the r.h.s. of \eqref{eq-anoth-way} is equal to}
\begin{multline*}
\int\limits_{\R^{k+2l}}\E\left[\#\{(x_1,\dots,x_{k+2l})\in B\,:\, \bu(x_1,\dots,x_{k+2l})=\by\}\right]f_0(y_0)\dots f_{k+2l-1}(y_{k+2l-1})\,\dd\by\\
=\E\int_B|\det J_{\bu}(\bx)|\,\prod_{i=0}^{k+2l-1}f_i(u_i(x_1,\dots,x_{k+2l}))\,\dd x_1\dots \dd x_{k+2l},
\end{multline*}
where in the last step we used Fubini's theorem and then~\eqref{1225}. The Jacobian matrix of $\bu$ coincides with the real Jacobian matrix of $\be$, the determinant of which is given by Lemma~\ref{1200}. Thus, switching from $\bu$ to $\be$ and again using Fubini's theorem we obtain
\begin{align*}
\E\left[\prod_{i=1}^{k+l}\mu(B_i)\right]&=2^l\int\limits_{B}\E\,\left[\varphi(\bx,\bz)\prod_{i=0}^{k+2l-1}f_i(\eta_i(\bx,\bz))\right]\dd\bx\,\dd\bz,
\end{align*}
{where $\varphi(\bx,\bz)$ is defined in \eqref{eq-phi-def}.}

Combining this with~\eqref{1835} implies~\eqref{1647}, and {due to \eqref{eq-rhokl}} the theorem follows.

%%%%%%%%%%%%%%%%%%%%%%%
%
%
%\section*{Acknowledgments}
%The authors are grateful to Prof. Manjunath Krishnapur for letting the authors know about some interesting papers on the topic.

\bibliographystyle{abbrv}
\bibliography{corrf2}

\end{document}